\newtheorem{thm}{Theorem}
\newtheorem{lemma}{Lemma}
\newtheorem{defn}{Definition}
\newtheorem*{remark}{Remark}
\begin{document}

\title{Ricci-flat cubic graphs with girth five}

\author[D. Cushing]{David Cushing}
\address{D. Cushing,  Department of Mathematical Sciences, Durham University, Durham DH1 3LE, UK}
\email{david.cushing@durham.ac.uk}

\author[R. Kangaslampi]{Riikka Kangaslampi}
\address{R. Kangaslampi,  Department of Mathematics and Systems Analysis, Aalto University, Aalto  FI-00076, Finland}
\email{riikka.kangaslampi@aalto.fi}

\author[Y. Lin]{Yong Lin}
\address{Y. Lin, Department of Mathematics, School of information, Renmin University of China, Beijing 100872, China}
\email{linyong01@ruc.edu.cn}

\author[S. Liu]{Shiping Liu}
\address{S. Liu, School of Mathematical Sciences, University of Science and Technology of China, Hefei 230026, China}
\email{spliu@ustc.edu.cn}

\author[L. Lu]{Linyuan Lu}
\address{L. Lu, Department of Mathematics, University of South Carolina, Columbia, SC 29208, USA}
\email{lu@math.sc.edu}

\author[S.-T. Yau]{Shing-Tung Yau}
\address{S.-T. Yau, Department of Mathematics, Harvard University, Cambridge, MA 02138, USA}
\email{yau@math.harvard.edu}

\date{\today}
%\maketitle

\begin{abstract}
We classify all connected, simple, 3-regular graphs with girth at least 5 that are Ricci-flat. We use the definition of Ricci curvature on graphs given in Lin-Lu-Yau, Tohoku Math., 2011, which is a variation of Ollivier, J. Funct. Anal., 2009. A graph is Ricci-flat, if it has vanishing Ricci curvature on all edges. We show, that the only Ricci-flat cubic graphs with girth at least 5 are the Petersen graph, the Triplex and the dodecahedral graph. This will correct the classification in \cite{LLY14} that misses the Triplex.

\end{abstract}

\maketitle

\section{Introduction}
Ollivier developed a notion of Ricci curvature of Markov chains valid on metric spaces including graphs in \cite{Oll}. The Ollivier-Ricci curvature $\kappa_{p}$ depends on an idleness parameter, $p\in [0,1]$. For graphs, the Ollivier-Ricci curvature has been studied mainly for idleness 0, see \cite{BM, Im, JL, LY10, P}, but for example in \cite{OllVil} Ollivier and Villani considered idleness $\frac{1}{d+1},$ where $d$ is the degree of a regular graph, in order to investigate the curvature of the hypercube. In \cite{LLY11}, Lin, Lu, and Yau modified the definition of Ollivier-Ricci curvature to compute the derivative of the curvature with respect to the idleness, which they denote by $\kappa$.

Throughout this article, let $G=(V,E)$ be a simple graph with vertex set $V$ and edge set $E$. Let $d_x$ denote the degree of the vertex $x\in V$. Let $d(x,y)$ denote the length of the shortest path between two vertices
$x$ and $y$.

We define the following probability distributions $\mu^p_x$ for any
$x\in V,\: p\in[0,1]$:
$$\mu_x^p(z)=\begin{cases}p,&\text{if $z = x$,}\\
\frac{1-p}{d_x},&\text{if $z\sim x$,}\\
0,& \mbox{otherwise.}\end{cases}$$

\begin{defn}
Let $G=(V,E)$ be a locally finite graph. Let $\mu_{1},\mu_{2}$ be two probability measures on $V$. The {\it Wasserstein distance} $W_1(\mu_{1},\mu_{2})$ between $\mu_{1}$ and $\mu_{2}$ is defined as
\begin{equation} \label{eq:W1def}
W_1(\mu_{1},\mu_{2})=\inf_{\pi} \sum_{y\in V}\sum_{x\in V} d(x,y)\pi(x,y),
\end{equation}
where the infimum runs over all transportation plans $\pi:V\times  V\rightarrow [0,1]$ satisfying
$$\mu_{1}(x)=\sum_{y\in V}\pi(x,y),\:\:\:\mu_{2}(y)=\sum_{x\in V}\pi(x,y).$$
\end{defn}

The transportation plan $\pi$ moves a distribution given by $\mu_1$ to a distribution given by
$\mu_2$, and $W_1(\mu_1,\mu_2)$ is a measure for the minimal effort
which is required for such a transition. If $\pi$ attains the infimum in \eqref{eq:W1def} we call it an {\it
  optimal transport plan} transporting $\mu_{1}$ to $\mu_{2}$.

\begin{defn}
The $ p-$Ollivier-Ricci curvature on an edge $x\sim y$ in $G=~(V,E)$ is
$$\kappa_{ p}(x,y)=1-W_1(\mu^{ p}_x,\mu^{ p}_y),$$
where $p$ is called the {\it idleness}.

The Ollivier-Ricci curvature introduced by Lin-Lu-Yau in
\cite{LLY11}, is defined as
$$\kappa(x,y) = \lim_{ p\rightarrow 1}\frac{\kappa_{ p}(x,y)}{1- p}.$$
\end{defn}

We call a graph Ricci-flat if $\kappa(x,y)=0$ on all edges $xy\in G$.  Note that for regular graphs we can calculate the curvature $\kappa(x,y)$ for an edge $xy$ as
$$\kappa(x,y)=\frac{d+1}{d}\kappa_{\frac{1}{d+1}}(x,y),$$
as shown in \cite{I} by Bourne, Cushing, Liu, M\"unch and Peyerimhoff. We show, that the only Ricci-flat cubic graphs with at least girth 5 are the Petersen graph, the Triplex and the dodecahedral graph (see Figure \ref{fig:result}).

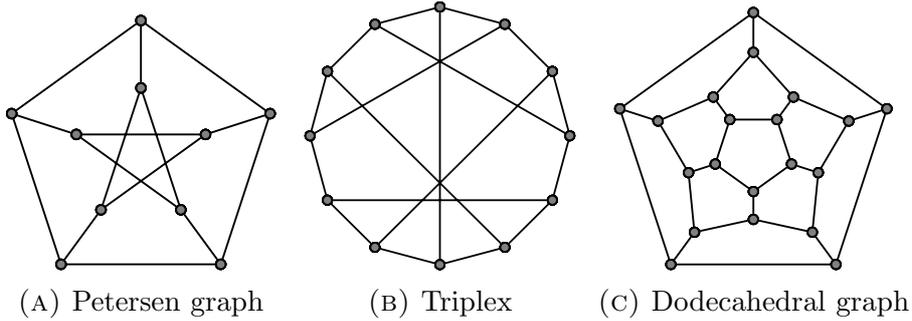
\begin{figure}[h!]
\begin{center}
\tikzstyle{every node}=[circle, draw, fill=black!50,
                        inner sep=0pt, minimum width=4pt]
\begin{subfigure}[b]{0.3\textwidth}
    \centering
        \resizebox{\linewidth}{!}{
 \begin{tikzpicture}[thick,scale=1]%
        \draw \foreach \x in {18,90,...,306} {
        (\x:2)  -- (\x-72:2) 
         
         (90:2) node[label=right:$$](v){}
	 (162:2) node[label=right:$$](u){}
	 (18:2) node[label=left:$$](w){}
	 (234:2) node[label=right:$$](y){}
	 (306:2) node[label=left:$$](x){}
	 (90:1) node[label=right:$$](v1){}
	 (162:1) node[label=right:$$](u1){}
	 (18:1) node[label=left:$$](w1){}
	 (234:1) node[label=right:$$](y1){}
	 (306:1) node[label=left:$$](x1){}
	};	        
\draw (x)--(x1) (y)--(y1) (u)--(u1) (v)--(v1) (w) --(w1);
\draw (x1)--(u1) (y1)--(v1) (u1) --(w1) (v1)--(x1) (w1)--(y1);
\end{tikzpicture} 
}                           
\caption{Petersen graph}
\label{fig:petersen}
    \end{subfigure}
    \begin{subfigure}[b]{0.3\textwidth}
        \centering
        \resizebox{\linewidth}{!}{

 \begin{tikzpicture}[thick,scale=1]%
 \draw \foreach \x in {0,30,...,330} {
        (\x:2)  -- (\x+30:2)    
		(90:2) node(x1){}
		(60:2) node(x2){}
		(30:2) node(x3){}
		(0:2) node(x4){}
		(-30:2) node(x5){}
		(-60:2) node(x6){}
		(120:2) node(x12){}
		(150:2) node(x11){}
		(180:2) node(x10){}
		(210:2) node(x9){}
		(240:2) node(x8){}
		(270:2) node(x7){}		
		};
 \draw (x1)--(x7) (x2)-- (x10) (x3)--(x8) (x4)--(x12) (x5)-- (x9) (x6)-- (x11);		
\end{tikzpicture}    
}                        
\caption{Triplex}
\label{fig:triplex}
    \end{subfigure}
    \begin{subfigure}[b]{0.33\textwidth}
        \centering
        \resizebox{\linewidth}{!}{

 \begin{tikzpicture}[thick,scale=0.6]%
       \draw \foreach \x in {18,90,...,306} {
        (\x+36:1)  -- (\x-36:1)
        (\x:3.5)  -- (\x-72:3.5)
        (270:1) node[label=right:$$](v){}
		(342:1) node[label=right:$$](u){}
		(198:1) node[label=left:$$](w){}
		(54:1) node[label=right:$$](y){}
		(126:1) node[label=left:$$](x){}
	        (270:1.7) node[label=right:$$](v1){}
		(342:1.7) node[label=right:$$](u1){}
		(198:1.7) node[label=left:$$](w1){}
		(54:1.7) node[label=right:$$](y1){}
		(126:1.7) node[label=left:$$](x1){}
	        (306:2.5) node[label=right:$$](v2){}
		(378:2.5) node[label=right:$$](u2){}
		(234:2.5) node[label=left:$$](w2){}
		(90:2.5) node[label=right:$$](y2){}
		(162:2.5) node[label=left:$$](x2){}	
		(306:3.5) node[label=right:$$](v3){}
		(378:3.5) node[label=right:$$](u3){}
		(234:3.5) node[label=left:$$](w3){}
		(90:3.5) node[label=right:$$](y3){}
		(162:3.5) node[label=left:$$](x3){}			
        };
        \draw (v)--(v1) (u)--(u1) (w)--(w1) (x)--(x1) (y)--(y1);
        \draw (y1)--(u2)--(u1)--(v2)--(v1)--(w2)--(w1)--(x2)--(x1)--(y2)--(y1);
        \draw (v2)--(v3) (u2)--(u3) (w2)--(w3) (x2)--(x3) (y2)--(y3);

\end{tikzpicture}    
}                        
\caption{Dodecahedral graph}
\label{fig:dodecahedron}
    \end{subfigure}
   \end{center}
   \caption{The three Ricci-flat cubic graphs with girth 5}
   \label{fig:result}
\end{figure}

\section{Classification of Ricci-flat cubic graphs with girth 5}

In \cite[Theorem 1]{LLY14} the authors classify Ricci-flat graphs with girth at least 5, but there is one 3-regular graph missing from their classification, namely the Triplex  (Figure \ref{fig:triplex}). With the following theorem we can complete the classification. 

\begin{thm}\label{main_thm}
Let $G$ be a  3-regular simple graph with girth $g(G)\geq 5$. If $\kappa(x,y)=0$ on all edges $(x,y)\in G$, then $G$ is the Petersen graph, the Triplex, or the dodecahedral graph.
\end{thm}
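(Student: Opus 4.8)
The plan is to first reduce the vanishing-curvature hypothesis to a purely local combinatorial condition on each edge, and then to run a local-to-global growth argument forcing the whole graph to be one of the three listed. Since $G$ is $3$-regular, the identity $\kappa(x,y)=\tfrac{4}{3}\kappa_{1/4}(x,y)$ from \cite{I} shows that $\kappa(x,y)=0$ is equivalent to $W_1(\mu^{1/4}_x,\mu^{1/4}_y)=1$. Fix an edge $x\sim y$ and write $N(x)\setminus\{y\}=\{a,b\}$ and $N(y)\setminus\{x\}=\{c,d\}$. Because $g(G)\ge 5$ there are no triangles or $4$-cycles, so $a,b,c,d$ are four distinct, pairwise non-adjacent vertices, and every ball of radius $2$ is a tree. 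Consequently $\mu^{1/4}_x$ and $\mu^{1/4}_y$ agree (each equal to $\tfrac14$) at $x$ and at $y$, their difference is $\tfrac14(\delta_a+\delta_b-\delta_c-\delta_d)$, and every distance between a vertex of $\{a,b\}$ and a vertex of $\{c,d\}$ lies in $\{2,3\}$.

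My first step would be the local characterization: $\kappa(x,y)=0$ if and only if there is a perfect matching between $\{a,b\}$ and $\{c,d\}$ in which matched vertices share a common neighbour (equivalently, $xy$ lies on two pentagons that together pass through all of $a,b,c,d$). For the lower bound I would use Kantorovich duality: the $1$-Lipschitz function $\phi=d(\cdot,\{c,d\})$ gives $W_1\ge \tfrac14\big(d(a,\{c,d\})+d(b,\{c,d\})\big)\ge 1$. If such a distance-$2$ matching exists, transporting the mass at $a$ and at $b$ to their partners along the two pentagons is a plan of cost $\tfrac14\cdot 2+\tfrac14\cdot 2=1$, so $W_1=1$. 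Conversely, if Hall's condition fails then some vertex of one side is at distance $\ge 3$ from both vertices of the other side; feeding this into the same dual function, or into its reflection $-d(\cdot,\{a,b\})$, yields $W_1\ge\tfrac54>1$. Hence flatness of every edge forces \emph{every} edge of $G$ to lie on at least two pentagons arranged so as to cover its four adjacent neighbours.

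With this in hand I would carry out the global classification by growth. Start from a pentagon $v_0v_1v_2v_3v_4$, which exists because each edge now lies on one, and observe that the fifth neighbours $u_0,\dots,u_4$ of the $v_i$ are forced to be five distinct new vertices, since any coincidence would create a triangle or a $4$-cycle. Applying the matching condition to the pentagon edges then forces specific distance-$2$ relations among the $u_i$: for example, on the edge $v_0v_1$ the pair $v_4,v_2$ is already at distance $2$ via $v_3$, so flatness forces either that $u_0$ and $u_1$ acquire a common neighbour or that the crossed pairing holds. Iterating this, first determining each radius-$2$ ball and then the distance-$3$ identifications forced at each stage, produces a finite branching tree of possibilities; each branch either closes up consistently into the Petersen graph, the Triplex, or the dodecahedral graph, or else violates $3$-regularity, the girth bound, or the matching condition. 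In particular this argument also rules out infinite $G$, since the forced identifications prevent unbounded tree-like growth.

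The main obstacle is the completeness of this case analysis rather than any single deduction: the branches proliferate quickly, and it is precisely an overlooked branch that caused \cite{LLY14} to miss the Triplex. I would therefore organise the enumeration as a breadth-first construction with an explicit, checkable list of branch points at each radius, and confirm exhaustiveness by an independent computer search over cubic graphs of girth $5$, so that no admissible configuration is silently dropped.
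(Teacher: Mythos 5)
Your local characterization is correct and is essentially the paper's Lemma~\ref{local_structure} (every flat edge lies on two pentagons meeting only in that edge), argued somewhat more carefully than in the paper: the dual functions $d(\cdot,\{c,d\})$ and $-d(\cdot,\{a,b\})$ do give $W_1\ge 1$ always and $W_1\ge\tfrac54$ whenever the distance-$2$ matching between $\{a,b\}$ and $\{c,d\}$ fails, and the explicit two-pentagon transport plan gives the converse. The genuine gap is the second half. The global classification --- which is the entire content of the theorem beyond the lemma, and precisely where \cite{LLY14} went astray --- is described but not executed. You assert that iterating the matching condition ``produces a finite branching tree of possibilities'' each branch of which either closes up into one of the three graphs or dies, but you never exhibit the branches, never show the tree is finite, and never verify that the surviving branches are exactly the Petersen graph, the Triplex and the dodecahedral graph. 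The paper's proof consists almost entirely of this enumeration (Case~1 with subcases i)--iii), Case~2 a) and b) with a further half-dozen combinations), and it is exactly in an enumeration of this kind that the Triplex was originally overlooked; announcing that the analysis can be organised breadth-first is a plan, not a proof.

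Two specific points make this more than a presentational complaint. First, your claim that the argument ``also rules out infinite $G$, since the forced identifications prevent unbounded tree-like growth'' is unsupported: a priori a cubic girth-$5$ graph in which every edge lies on two edge-disjoint pentagons could be infinite, and only the completed case analysis shows that the construction always terminates, either by closing up or by isolating a vertex that can no longer be given degree $3$ at distance at most $4$ from a pentagon partner. Second, the proposed fallback of confirming exhaustiveness ``by an independent computer search over cubic graphs of girth $5$'' cannot close the gap, since such a search is restricted to graphs of bounded order and you have established no a priori bound on $|V(G)|$; the paper's own computation over cubic graphs on at most $20$ vertices is offered as a sanity check, not as part of the proof. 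To complete your argument you would need to carry out the branching explicitly, as the paper does starting from the two pentagons sharing a single edge.
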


Assuming that these graphs can be embedded to surfaces in such a way that they tile the surface with only pentagonal faces, the authors in \cite{LLY14} deduce that they only need to consider graphs with 10 or 20 vertices, obtaining the Petersen graph and the dodecahedral graph. However, the Triplex can be embedded to a torus with three pentagonal, two hexagonal and one 9-gonal face. 
A direct way to fix this problem in the proof of \cite[Theorem 1]{LLY14} is given in \cite{CKLLLY18}.

Using Theorem \ref{main_thm} together with the results for non-cubic graphs from \cite{LLY14} we  have the following classification:

\begin{thm}
Suppose that $G$ is a Ricci-flat graph with girth $g(G) \geq 5$. Then $G$ is one of the following graphs:
 the infinite path,
 cycle $C_n$ with $n \geq 6$,
 the dodecahedral graph,
 the Triplex,
 the Petersen graph and
 the half-dodecahedral graph.
\end{thm}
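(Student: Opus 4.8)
\section{Proof of the full classification}

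The plan is to derive the full classification by combining Theorem \ref{main_thm} with the non-cubic part of the classification in \cite{LLY14}, splitting the argument according to the degrees occurring in $G$. Because Ricci-flatness is an edge-local condition, the first step is to constrain these degrees. For the lower bound I would rule out leaves directly: if $y$ is a pendant vertex with unique neighbour $x$, then for $p$ near $1$ the optimal transport from $\mu^p_x$ to $\mu^p_y$ must push the idle mass at $y$ outward, and a short transport computation yields $\kappa(x,y)>0$ (indeed $\kappa=2$ already for the single-edge graph), contradicting $\kappa\equiv 0$; hence $d_x\ge 2$ for all $x$. For the upper bound I would invoke the degree bound proved in \cite{LLY14} as a step independent of their enumeration, namely that Ricci-flatness together with $g(G)\ge 5$ forces $d_x\le 3$ everywhere. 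Thus $2\le d_x\le 3$ for every vertex.

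With the degrees pinned to $\{2,3\}$, I would split into two cases. If $G$ is $3$-regular, Theorem \ref{main_thm} applies directly and yields the Petersen graph, the Triplex, or the dodecahedral graph. Otherwise $G$ contains a vertex of degree $2$. If $G$ is $2$-regular it is either a finite cycle or the infinite path; an elementary transport computation on a cycle edge gives $\kappa=0$ exactly when the cycle has length at least $6$, the pentagon $C_5$ having curvature $\tfrac12>0$, while every edge of the infinite path has $\kappa=0$. This recovers the cycles $C_n$ with $n\ge 6$ and the infinite path.

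It remains to treat connected graphs that have both degree-$2$ and degree-$3$ vertices. Here I would follow the non-cubic analysis of \cite{LLY14}: imposing $\kappa=0$ at each degree-$3$ vertex forces strong local balance conditions on how the degree-$2$ vertices are arranged, and combined with $g(G)\ge 5$ these conditions determine the graph uniquely as the half-dodecahedral graph. Taking the union of the cases, the complete list of connected Ricci-flat graphs with girth at least $5$ is the infinite path, the cycles $C_n$ with $n\ge 6$, the half-dodecahedral graph, the dodecahedral graph, the Triplex, and the Petersen graph, as claimed. The only difference from the list in \cite[Theorem 1]{LLY14} is the Triplex, supplied by Theorem \ref{main_thm}; being $3$-regular it enters only in the cubic case and leaves the non-cubic conclusions of \cite{LLY14} untouched.

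I expect the genuine difficulty to lie entirely in the case $g(G)=5$. When pentagons pass through an edge, two of the second neighbours of its endpoints lie at distance $2$ rather than $3$, so the transport cost, and hence the curvature, depends on the number of such pentagons; this is exactly the sensitivity that led \cite{LLY14} to overlook the Triplex. It makes both the degree bound and the uniqueness of the half-dodecahedral graph delicate, whereas the subcases with $g(G)\ge 6$, where radius-$2$ balls are trees, are comparatively routine.
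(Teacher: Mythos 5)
Your proposal follows essentially the same route as the paper, which proves this theorem simply by combining Theorem \ref{main_thm} (the cubic case, now including the Triplex) with the non-cubic analysis of \cite{LLY14}; your extra details --- ruling out leaves, the degree bound $d_x \le 3$, the cycle computation showing $C_5$ has $\kappa = \tfrac{1}{2}$ while $C_n$ with $n \ge 6$ and the infinite path are flat, and delegating the mixed-degree case to \cite{LLY14} --- are correct elaborations of what the paper leaves implicit. Your closing observation that the Triplex is $3$-regular and therefore cannot disturb the non-cubic conclusions of \cite{LLY14} is exactly the point that makes the combination legitimate.
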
 

Before proving Theorem \ref{main_thm}, let us first consider the local structure of  Ricci-flat cubic graphs. The following lemma shows that in order to have zero curvature on an edge $xy$, the edge must lie on two pentagons.

\begin{lemma}\label{local_structure} Let $G=(V,E)$ be a 3-regular graph of girth $g(G)\ge 5$. If $\kappa(x,y)=0$, then the smallest cycle $C_n$ supporting the edge $xy$ has $n=5$, and in addition $xy$ belongs to two $5$-cycles $P_1$ and $P_2$ such that $P_1\cap P_2=xy$.
\end{lemma}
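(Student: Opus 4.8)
The plan is to work with the explicit idleness value $p=\frac14$, using the identity $\kappa(x,y)=\frac{4}{3}\kappa_{1/4}(x,y)=\frac43\bigl(1-W_1(\mu^{1/4}_x,\mu^{1/4}_y)\bigr)$ recorded above, so that $\kappa(x,y)=0$ is equivalent to $W_1(\mu^{1/4}_x,\mu^{1/4}_y)=1$. First I would fix the local picture. Write the neighbours of $x$ as $y,x_1,x_2$ and those of $y$ as $x,y_1,y_2$. Since $g(G)\ge 5$ there are no triangles or quadrilaterals, which forces the six vertices $x,y,x_1,x_2,y_1,y_2$ to be distinct, forbids all edges inside $\{x_1,x_2\}$ and inside $\{y_1,y_2\}$, and forbids every edge between $\{x_1,x_2\}$ and $\{y_1,y_2\}$. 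Consequently $d(x_i,y_j)\in\{2,3\}$ for each $i,j$: the path $x_i\,x\,y\,y_j$ gives the upper bound $3$, while $d(x_i,y_j)=2$ holds precisely when $x_i$ and $y_j$ have a common neighbour $w_{ij}$, which is then unique (two common neighbours would create a $4$-cycle) and satisfies $w_{ij}\notin\{x,y,x_1,x_2,y_1,y_2\}$.

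Next I would compute $W_1(\mu^{1/4}_x,\mu^{1/4}_y)$. With $p=\frac14$ and $d=3$ both measures assign mass $\frac14$ to each vertex of their closed neighbourhood, so they agree (mass $\frac14$ each) on $x$ and on $y$ and differ only on $\{x_1,x_2\}$ versus $\{y_1,y_2\}$. Leaving the shared mass fixed exhibits a transport plan realising the $2\times 2$ assignment between $\frac14\delta_{x_1}+\frac14\delta_{x_2}$ and $\frac14\delta_{y_1}+\frac14\delta_{y_2}$ with costs $a_{ij}:=d(x_i,y_j)$; since the relevant transportation polytope has only the two permutation vertices, I expect
\[
W_1(\mu^{1/4}_x,\mu^{1/4}_y)=\frac14\min\{a_{11}+a_{22},\,a_{12}+a_{21}\}.
\]
The upper bound is immediate from the two matchings, and the matching lower bound is the step I expect to be the main obstacle: it must be verified by Kantorovich--Rubinstein duality, exhibiting a $1$-Lipschitz potential $f$ (with $f(x)$ and $f(y)$ chosen so that all Lipschitz constraints involving $x$ and $y$ hold) whose value $\frac14\bigl(f(x_1)+f(x_2)-f(y_1)-f(y_2)\bigr)$ equals the right-hand side, thereby confirming that no routing through $x$ or $y$ can beat the assignment optimum.

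Finally I would read off the structure. As each $a_{ij}\ge 2$, both matching sums are at least $4$, so $W_1=1$ forces $\min\{a_{11}+a_{22},a_{12}+a_{21}\}=4$, i.e. one of the two matchings consists of two distance-$2$ pairs; say $a_{11}=a_{22}=2$. Each such pair $(i,j)$ yields a $5$-cycle $x\,x_i\,w_{ij}\,y_j\,y\,x$ through the edge $xy$, so we obtain two pentagons $P_1=x\,x_1\,w_{11}\,y_1\,y$ and $P_2=x\,x_2\,w_{22}\,y_2\,y$. A short check using $3$-regularity and $g(G)\ge 5$ shows that $x_1,x_2,y_1,y_2,w_{11},w_{22}$ are pairwise distinct (for instance $w_{11}=w_{22}$ would force a vertex of degree $\ge 4$), whence $P_1\cap P_2=xy$. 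Since $P_1$ exists and $g(G)\ge 5$, the shortest cycle supporting $xy$ has length exactly $5$, which completes the proof.
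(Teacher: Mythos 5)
Your proposal is correct and follows essentially the same route as the paper: reduce $W_1(\mu^{1/4}_x,\mu^{1/4}_y)$ to an optimal matching of $\{x_1,x_2\}$ against $\{y_1,y_2\}$, use girth $\ge 5$ to force each matched distance to equal $2$, and read off the two pentagons from the resulting common neighbours. You are more explicit than the paper about justifying the reduction to the $2\times 2$ assignment and about checking $P_1\cap P_2=xy$ (the paper simply asserts both), but the underlying argument is the same.
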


\begin{proof}
Denote the two other neighbours of $x$ in addition to $y$ by $x_1$ and $x_2$, and the neighbours of $y$ by $y_1$ and $y_2$. We can assume that an optimal transport plan $\pi$ only transports probability distribution from $x_1$ to $y_1$ and $x_2$ to $y_2$. Since there are no triangles or squares, $d(x_i,y_i)\geq 2$. The only possibility to have $\kappa(x,y)=0$ i.e. $\kappa_{1/4}(x,y)= 0$ is if $d(x_i,y_i)=2$, $i=1,2$. Since $d(x_1,y_1)=2$, there exists a path $x_1 u y_1$, where the vertex $u$ cannot be any of the vertices $x_1, x_2,x,y, y_1$ or $y_2$ without forming a triangle or a square. Similarly there exists another vertex $v$ and a path $x_2 y y_2$. Thus we have two pentagons, $x x_1 u y_1 y$ and $x x_2 u y_2 y$, that intersect only on the edge $xy$, as in Figure \ref{fig:start}.

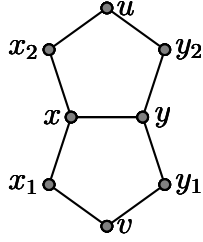
\begin{figure}[h!]
\begin{center}
\tikzstyle{every node}=[circle, draw, fill=black!50,
                        inner sep=0pt, minimum width=4pt]
 \begin{tikzpicture}[thick,scale=0.8]%
 \draw \foreach \x in {18,90,...,306} {
        (\x:1)  -- (\x+72:1)    
		(18:1) node[label=right:$y_2$]{}
		(162:1) node[label=left:$x_2$]{}
		(90:1) node[label=right:$u$]{}
		};
   \begin{scope}[shift={(0,-1.618)}]
       \draw \foreach \x in {18,90,...,306} {
        (\x+36:1)  -- (\x-36:1)
        (270:1) node[label=right:$v$]{}
		(342:1) node[label=right:$y_1$]{}
		(198:1) node[label=left:$x_1$]{}
		(54:1) node[label=right:$y$]{}
		(126:1) node[label=left:$x$]{}
        };
  \end{scope} 
\end{tikzpicture}                            
\end{center}
\caption{Two pentagons that intersect only on $xy$}
\label{fig:startstart}
\end{figure}  
\end{proof}

\begin{remark} Lemma \ref{local_structure} implies that Ricci-flat cubic graphs with girth at least 5 in fact have girth exactly 5.
\end{remark}

Let us now proceed to prove Theorem \ref{main_thm}:

\begin{proof}[Proof of Theorem \ref{main_thm}]
Consider an edge $xy$ in the graph. By lemma \ref{local_structure} we know that this edge is on two pentagons that intersect only on $xy$, as in Figure \ref{fig:startstart}. We will now construct all possible 3-regular, girth 5 simple graphs with $\kappa=0$ on all edges, starting from these pentagons. The vertices $x_2$ and $y_2$ cannot be adjacent to any other vertex in $\{x,y,x_1,y_1,u,v\}$ in order to have girth 5. Thus $x_2$ and $y_2$ are connected to two new vertices, $x_3$ and $y_3$, respectively. The vertex $u$ is either connected to $v$ or to a new vertex $u_1$, as in Figure \ref{fig:start}. Let us consider these two cases separately.

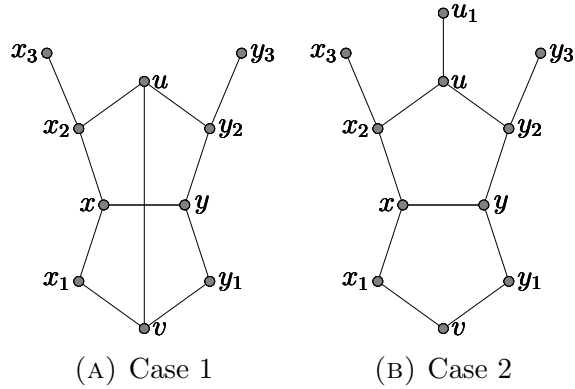
\begin{figure}[h!]
\begin{center}
\tikzstyle{every node}=[circle, draw, fill=black!50,
                        inner sep=0pt, minimum width=4pt]
    \begin{subfigure}[b]{0.3\textwidth}
    \centering
        \resizebox{\linewidth}{!}{
          \begin{tikzpicture}[scale=1]%
 \draw \foreach \x in {18,90,...,306} {
        (\x:1)  -- (\x+72:1)    
		(18:1) node[label=right:$y_2$](y2){}
		(162:1) node[label=left:$x_2$](x2){}
		(90:1) node[label=right:$u$](u){}
		(45:2) node[label=right:$y_3$](y3){}
		(135:2) node[label=left:$x_3$](x3){}
		};
   \begin{scope}[shift={(0,-1.618)}]
       \draw \foreach \x in {18,90,...,306} {
        (\x+36:1)  -- (\x-36:1)
        (270:1) node[label=right:$v$](v){}
		(342:1) node[label=right:$y_1$](y1){}
		(198:1) node[label=left:$x_1$](x1){}
		(54:1) node[label=right:$y$](y){}
		(126:1) node[label=left:$x$](x){}
        }; 
  \end{scope} 
	\draw (u) -- (v);
	\draw (x2) -- (x3);
	\draw (y2) -- (y3);
\end{tikzpicture} 
        }
        \caption{Case 1}   
        \label{fig:case1}
    \end{subfigure}
    \begin{subfigure}[b]{0.3\textwidth}
        \centering
        \resizebox{\linewidth}{!}{
            \begin{tikzpicture}[scale=1]%
 \draw \foreach \x in {18,90,...,306} {
        (\x:1)  -- (\x+72:1)    
		(18:1) node[label=right:$y_2$](y2){}
		(162:1) node[label=left:$x_2$](x2){}
		(90:1) node[label=right:$u$](u){}
		(90:2) node[label=right:$u_1$](u1){}
		(45:2) node[label=right:$y_3$](y3){}
		(135:2) node[label=left:$x_3$](x3){}
		};
   \begin{scope}[shift={(0,-1.618)}]
       \draw \foreach \x in {18,90,...,306} {
        (\x+36:1)  -- (\x-36:1)
        (270:1) node[label=right:$v$]{}
		(342:1) node[label=right:$y_1$]{}
		(198:1) node[label=left:$x_1$]{}
		(54:1) node[label=right:$y$]{}
		(126:1) node[label=left:$x$]{}
        };
  \end{scope} 
	\draw (u) -- (u1);
	\draw (x2) -- (x3);
	\draw (y2) -- (y3);
\end{tikzpicture} 
        }
        \caption{Case 2}
        \label{fig:case2}
    \end{subfigure}
\caption{Two possible graphs to start the construction} 
\label{fig:start}
\end{center}
\end{figure}

\subsubsection*{Case 1:} Since the edge $y_2y_3$ has to lie on two pentagons, two of the following three possible ways for such pentagons to exist must be true (see Figure \ref{fig:case1_cont}): 
\begin{enumerate}
\item[i)] $y_3$ is adjacent to $x_3$, 
\item[ii)] $y_3$ is adjacent to $x_1$ or 
\item[iii)] there is a new vertex $y_4$, adjacent to both $y_3$ and $y_1$.
\end{enumerate} 

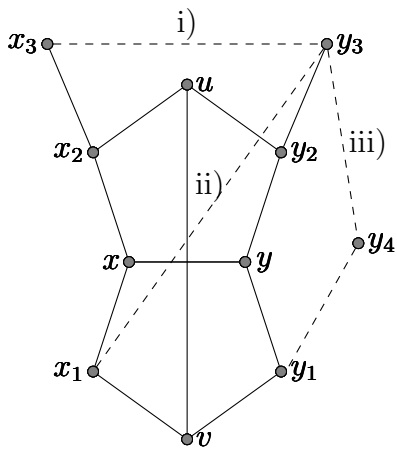
\begin{figure}[h!]
\begin{center}
\tikzstyle{every node}=[circle, draw, fill=black!50,
                        inner sep=0pt, minimum width=4pt]
 \begin{tikzpicture}[scale=1.3]%
 \draw \foreach \x in {18,90,...,306} {
        (\x:1)  -- (\x+72:1)    
		(18:1) node[label=right:$y_2$](y2){}
		(162:1) node[label=left:$x_2$](x2){}
		(90:1) node[label=right:$u$](u){}
		(45:2) node[label=right:$y_3$](y3){}
		(135:2) node[label=left:$x_3$](x3){}
		};
   \begin{scope}[shift={(0,-1.618)}]
       \draw \foreach \x in {18,90,...,306} {
        (\x+36:1)  -- (\x-36:1)
        (270:1) node[label=right:$v$](v){}
		(342:1) node[label=right:$y_1$](y1){}
		(198:1) node[label=left:$x_1$](x1){}
		(54:1) node[label=right:$y$](y){}
		(126:1) node[label=left:$x$](x){}
		(30:2) node[label=right:$y_4$](y4){}
        }; 
  \end{scope} 
	\draw (u) -- (v);
	\draw (x2) -- (x3);
	\draw (y2) -- (y3);
	\draw [dashed] (x3) -- node[draw=none,fill=none,above] {i)} ++(y3) ;
	\draw [dashed] (x1) -- node[draw=none,fill=none,above] {ii)} ++(y3);
	\draw [dashed] (y3) -- node[draw=none,fill=none,right] {iii)} ++(y4) -- (y1);
%	\draw [dashed] (y3) to [out=150,in=30] (x3);
\end{tikzpicture}                      
           
\end{center}
\caption{The three ways to construct a pentagon with $y_2y_3$ in Case 1}
\label{fig:case1_cont}
\end{figure}

The cases ii) and iii) cannot be true at the same time, since then it would not be possible to continue the construction in order to have pentagons with the edge $(x_2,x_3)$: the only vertex with degree less than three, $y_4$, would be too far from $x_3$. 

If i) and ii) are true, then the only vertices with degree less that 3 are $x_3$ and $y_1$, and $d(x_3,y_1)=4$. Thus, the last edge from $x_3$ must go to $y_1$ for it to be on a pentagon. Now all edges lie on two pentagons, and we have constructed the Petersen graph with 10 vertices. 

If i) and iii) are true, then there must be another vertex $x_4$ which is adjacent to $x_1$, since $x_1$ cannot be adjacent to either of the existing vertices with degree less than 3, that is, $x_3$ or $y_4$, without forming a square. Let us then construct pentagons with $x_1x_4$. There are two possibilities for the first pentagon: either $x_4$ is adjacent to $x_3$ or to $y_4$. Consider the first possibility. Then $x_4$ cannot be adjacent to $y_4$, since that would create a square $x_4x_3y_3y_4$. So there must be yet another vertex $x_5$ to which $x_4$ is adjacent, and which is adjacent to $y_4$. But now all other vertices in the graph but $x_5$ have degree 3, and the construction cannot be continued. Consider then the second possibility, $x_4$ being adjacent to $y_4$. Similarly, now $x_4$ cannot be adjacent to $x_3$, we need a new vertex $x_5\sim x_4$. Then the only other vertex with degree less than 3 is $x_3$, and $d(x_5,x_3)=4$, so $x_5$ must be adjacent to $x_3$. But that leaves $x_5$ the only vertex with degree less than 3, and the construction cannot be continued. Thus, i) and iii) cannot be true at the same time, and the only graph with the edge $uv$ is the Petersen graph.

\subsubsection*{Case 2:} Since the edge $u u_1$ must lie on two pentagons, one of them through $x_2$ and another through $y_2$, we have two isomorphically different possibilities: \begin{enumerate}
\item[a)]there is one new vertex $x_4$ adjacent to $u_1$ and $x_3$ and $u_1$ is also adjacent to $y_1$ (Figure \ref{fig:case2a}) or
\item[b)] there are two new vertices, $x_4$ adjacent to $u_1$ and $x_3$ and $y_4$ adjacent to $u_1$ and $y_3$ (Figure \ref{fig:case2b}).
\end{enumerate}

If $u_1$ were adjacent to both $x_1$ and $y_1$, that would create a square $u_1 y_1 v x_1$. The case where $u_1$ would be adjacent to $x_1$ and to a vertex $y_4$ with $y_4\sim y_3$ is isomorphic to the case a) above.
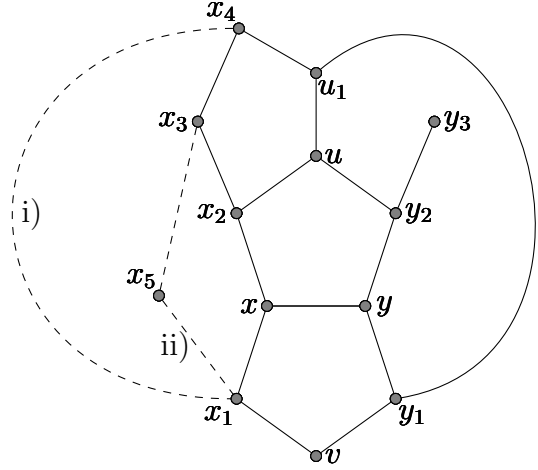
\begin{figure}[h!]
\begin{center}
\tikzstyle{every node}=[circle, draw, fill=black!50,
                        inner sep=0pt, minimum width=4pt]
 \begin{tikzpicture}[scale=1.1]%
 \draw \foreach \x in {18,90,...,306} {
        (\x:1)  -- (\x+72:1)    
		(18:1) node[label=right:$y_2$](y2){}
		(162:1) node[label=left:$x_2$](x2){}
		(90:1) node[label=right:$u$](u){}
		(90:2) node[label=below right:$u_1$](u1){}
		(45:2) node[label=right:$y_3$](y3){}
		(135:2) node[label=left:$x_3$](x3){}
		(110:2.7) node[label=above left:$x_4$](x4){}
		(200:2) node[label=above left:$x_5$](x5){}
		};
   \begin{scope}[shift={(0,-1.618)}]
       \draw \foreach \x in {18,90,...,306} {
        (\x+36:1)  -- (\x-36:1)
        (270:1) node[label=right:$v$](v){}
		(342:1) node[label=below right:$y_1$](y1){}
		(198:1) node[label=below left:$x_1$](x1){}
		(54:1) node[label=right:$y$](y){}
		(126:1) node[label=left:$x$](x){}
	  }; 
  \end{scope} 
	\draw (u) -- (u1); \draw (u1) -- (x4);  
	\draw (x2) -- (x3); \draw (x4) -- (x3);
	\draw (y2) -- (y3); 
	\draw [dashed] (x3)-- (x5) -- node[draw=none,fill=none,left] {ii)} ++ (x1);
	\draw (u1) to [out=40,in=10, distance=3cm] (y1);
	\draw [dashed] (x4) to [out=180,in=180, distance=3.5cm] node[draw=none,fill=none,right] {i)} (x1) ;
%	\draw [dashed] (x3) to [out=200,in=200, distance=2.5cm] node[draw=none,fill=none,right] {ii)} (v) ;
\end{tikzpicture}                      
        \caption{Case 2 a) with two non-isomorphic ways to continue}
        \label{fig:case2a}
        \end{center}
    \end{figure}
    Assume that a) is true. Then there are two isomorphically different possibilities to have a pentagon through $x_3 x_2 x$, illustrated in Figure \ref{fig:case2a}: 
\begin{enumerate}
\item[i)] Assume $x_4\sim x_1$. Then in order to have two pentagons with $y_2y_3$, we must have $y_3\sim x_3$ and $y_3\sim v$, which gives us the Triplex.
\item[ii)] Assume that there is a new vertex $x_5$ adjacent to $x_1$ and $x_3$. Then there can exist two pentagons with $y_2y_3$ only if $y_3\sim x_4$ and $y_3\sim v$. But then the graph cannot be continued from $x_5$, since all other vertices already have degree 3.
\end{enumerate}
Note that having $x_3\sim v$ would also create a pentagon with $x_3 x_2 x$, but that is in fact isomorphic to i). 

 \begin{figure}[h!]
\begin{center}
\tikzstyle{every node}=[circle, draw, fill=black!50,
                        inner sep=0pt, minimum width=4pt]
 \begin{tikzpicture}[scale=1.1]%
       \draw \foreach \x in {18,90,...,306} {
        (\x:1)  -- (\x+72:1)    
		(18:1) node[label=right:$y_2$](y2){}
		(162:1) node[label=left:$x_2$](x2){}
		(90:1) node[label=right:$u$](u){}
		(90:2) node[label=below right:$u_1$](u1){}
		(45:2) node[label=right:$y_3$](y3){}
		(135:2) node[label=left:$x_3$](x3){}
		(110:2.7) node[label=above left:$x_4$](x4){}
		(70:2.7) node[label=above right:$y_4$](y4){}
		(200:2) node[label=above left:$x_5$](x5){}
		(-20:2) node[label=above right:$y_5$](y5){}
		};
   \begin{scope}[shift={(0,-1.618)}]
       \draw \foreach \x in {18,90,...,306} {
        (\x+36:1)  -- (\x-36:1)
        (270:1) node[label=below:$v$](v){}
		(342:1) node[label=below right:$y_1$](y1){}
		(198:1) node[label=below left:$x_1$](x1){}
		(54:1) node[label=right:$y$](y){}
		(126:1) node[label=left:$x$](x){}
	  }; 
  \end{scope} 
	\draw (u) -- (u1);\draw (u1) -- (x4); \draw (u1) -- (y4); 
	\draw (x2) -- (x3);\draw (x4) -- (x3);
	\draw (y2) -- (y3);\draw (y4) -- (y3);
	\draw [dashed] (x3)-- (x5) -- node[draw=none,fill=none,above] {iii)} ++ (x1);
		\draw [dashed] (x4) to [out=180,in=180, distance=3.5cm] node[draw=none,fill=none,right] {i)} (x1) ;
	\draw [dashed] (x3) to [out=200,in=200, distance=2.5cm] node[draw=none,fill=none,right] {ii)} (v) ;
	\draw [dashed] (y3)-- (y5) -- node[draw=none,fill=none,above] {vi)} ++ (y1);
		\draw [dashed] (y4) to [out=0,in=0, distance=3.5cm] node[draw=none,fill=none,right] {iv)} (y1) ;
	\draw [dashed] (y3) to [out=-20,in=-20, distance=2.5cm] node[draw=none,fill=none,left] {v)} (v) ;
\end{tikzpicture}                
        \caption{Case 2 b) with possible continuations}
        \label{fig:case2b}
        \end{center}
    \end{figure}
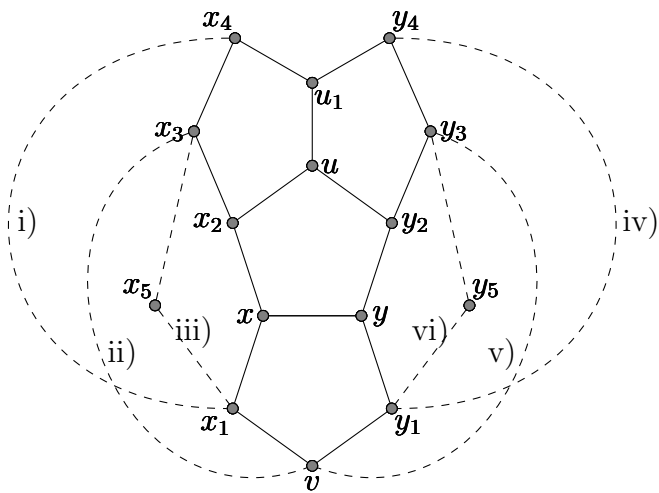
Assume then that b) is true. Then there are three possibilities to have a pentagon through $x_2 x x_1$, illustrated in Figure \ref{fig:case2b} with labels i), ii) and iii). Symmetrically, there is three possibilities to have a pentagon with $y_2 y y_1$, illustrated in Figure \ref{fig:case2b} with labels iv), v) and vi). Let us consider the possible non-isomorphic cases with pentagons on $x_2 x x_1$ and $y_2 y y_1$. There are four of them, since i) \& v) is just a mirror image of ii) \& iv), the four combinations i) \& vi), its mirror iii) \& iv), ii) \& vi) and its mirror iii) \& v) all give in fact the same graph, and the case ii) \& v) would require $d(v)=4$.
\begin{enumerate}
\item[i) \& iv)] Assume that $x_4\sim x_1$ and $y_4 \sim y_1$. Then in order to have a pentagon with $x_4 u_1 y_4$ the only possibility is that $x_3\sim y_3$. But that leaves only the vertex $v$ with degree less than three, and the construction cannot be continued.  
\item[i) \& v)] Assume that $x_4\sim x_1$ and $y_3\sim v$. Then to have a pentagon with $x_4 x_1 v$ we must have $x_3 \sim y_1$. But that leaves only the vertex $y_4$ with degree less than three, and the construction cannot be continued.  
\item[i) \& vi)] Assume that $x_4\sim x_1$ and that there exists a vertex $y_5$ such that $y_3\sim y_5 \sim y_1$. Then to have a pentagon with $u_1 x_4 x_1$ we must have $y_4 \sim v$. But that leaves only the vertices $x_3$ and $y_5$ with degree less than three, and since now $d(x_3,y_5)=5$, the construction cannot be continued.
\item[iii) \& vi)] Assume that there exists a vertex $x_5$ such that $x_3\sim x_5 \sim x_1$ and that a vertex $y_5$ such that $y_3\sim y_5 \sim y_1$. Then there is two non-isomorphic ways to have a pentagon with $v y_1 y_5$: either $x_5\sim y_5$ or there exists two new verties $v_1$ and $y_6$ such that $v\sim v_1\sim y_6\sim y_5$. In the first case we must then have $v\sim y_4$ to have a pentagon with $y_4 y_3 y_5$. But then the only vertex with degree less than three is $x_4$, and the construction cannot be continued. In the latter case (Figure \ref{fig:toDode}) there is two possibilities to have a pentagon with $x_5 x_1 v$: either $x_5\sim y_6$, or there is a new vertex $x_6$ with $x_5\sim x_6 \sim v_1$.

 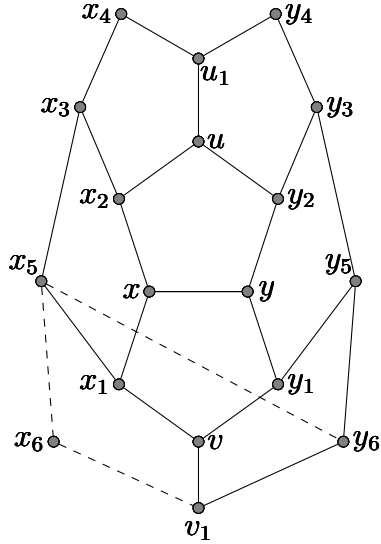
\begin{figure}[h!]
\begin{center}
\tikzstyle{every node}=[circle, draw, fill=black!50,
                        inner sep=0pt, minimum width=4pt]
 \begin{tikzpicture}[scale=1.1]%
       \draw \foreach \x in {18,90,...,306} {
        (\x:1)  -- (\x+72:1)    
		(18:1) node[label=right:$y_2$](y2){}
		(162:1) node[label=left:$x_2$](x2){}
		(90:1) node[label=right:$u$](u){}
		(90:2) node[label=below right:$u_1$](u1){}
		(45:2) node[label=right:$y_3$](y3){}
		(135:2) node[label=left:$x_3$](x3){}
		(110:2.7) node[label=left:$x_4$](x4){}
		(70:2.7) node[label=right:$y_4$](y4){}
		(200:2) node[label=above left:$x_5$](x5){}
		(-20:2) node[label=above left:$y_5$](y5){}
		};
   \begin{scope}[shift={(0,-1.618)}]
       \draw \foreach \x in {18,90,...,306} {
        (\x+36:1)  -- (\x-36:1)
        (270:1) node[label=right:$v$](v){}
		(342:1) node[label=right:$y_1$](y1){}
		(198:1) node[label=left:$x_1$](x1){}
		(54:1) node[label=right:$y$](y){}
		(126:1) node[label=left:$x$](x){}
		(270:1.8) node[label=below:$v_1$](v1){}
		(330:2) node[label=right:$y_6$](y6){}
		(210:2) node[label=left:$x_6$](x6){}
	  }; 
  \end{scope} 
	\draw (u) -- (u1);\draw (u1) -- (x4); \draw (u1) -- (y4); 
	\draw (x2) -- (x3);\draw (x4) -- (x3);
	\draw (y2) -- (y3);\draw (y4) -- (y3);
	\draw  (x3)-- (x5) --  (x1);
	\draw  (y3)-- (y5) -- (y1);
	\draw (v) -- (v1) -- (y6) -- (y5);
	\draw [dashed] (x5) -- (x6) -- (v1) ;
	\draw [dashed] (x5) -- (y6);
\end{tikzpicture}                
        \caption{The graph for case 2 b) with iii), vi) and vertices $v_1$ and $y_6$ have two possible ways to continue}
        \label{fig:toDode}
        \end{center}
    \end{figure}

Assume first that $x_5\sim y_6$. Then there are three vertices with degree less than three,  $v_1$, $x_4$ and $y_4$. Since $d(v_1,x_4)=d(v_1,y_4)=4$, $v_1$ must be adjacent to either $x_4$ or $y_4$. But that leaves only one vertex with degree less than three, and the construction cannot be continued. Thus we must have $x_5\sim x_6 \sim v_1$. To have a pentagon with $x_4 x_3 x_5$ we need yet another vertex $x_7$ with $x_4\sim x_7\sim x_6$. Similarly for a pentagon with $y_4 y_3 y_5$ we need a vertex $y_7$ with $y_4\sim y_7\sim y_6$. Then there are two vertices with degree less than three in the graph, $x_7$ and $y_7$, and connecting them with an edge finalizes the construction by creating the dodecahedral graph. 
\end{enumerate}
We have now shown that the only 3-regular graphs with girth 5 and with every edge on two pentagons intersecting only at that edge are  the Petersen graph, the Triplex and the dodecahedral graph. Therefore these are the only 3-regular Ricci-flat graphs with girth at least 5.
\end{proof}

We also calculated the curvatures of all cubic graphs with 20 vertices or less, and obtained the same three Ricci-flat graphs with girth 5. We  used the graph generator package \texttt{nauty} by B. D. McKay and A. Piperno \cite{MP} to generate the graphs and the Graph Curvature Calculator \cite{calculator} to calculate the curvatures.

\end{document}